\newcommand{\cx}{{\mathbb{C}}}
\newcommand{\D}{\mathbb D}
\newcommand{\C}{\mathbb C}
\newcommand{\R}{\mathbb R}
\newcommand{\N}{\mathbb N}
\newtheorem{theorem}{Theorem}[section]
\newtheorem{lemma}[theorem]{Lemma}
\newtheorem{prop}[theorem]{Proposition}
\newtheorem{cor}[theorem]{Corollary}
\theoremstyle{definition}
\begin{document}
\title{On dicritical singularities of Levi-flat sets}
\author{Sergey Pinchuk*, Rasul Shafikov** and Alexandre Sukhov***}
\begin{abstract}
It is proved that dicritical singularities of real analytic Levi-flat sets coincide with the set
of Segre degenerate points.
\end{abstract}

\maketitle

\let\thefootnote\relax\footnote{MSC: 37F75,34M,32S,32D.
Key words: Levi-flat  set, dicritical singularity, foliation, Segre variety.
}

* Department of Mathematics, Indiana University, 831 E 3rd St. Rawles Hall, Bloomington,   IN 47405, USA, e-mail: pinchuk@indiana.edu

** Department of Mathematics, the University of Western Ontario, London, Ontario, N6A 5B7, Canada,
e-mail: shafikov@uwo.ca. The author is partially supported by the Natural Sciences and Engineering 
Research Council of Canada.

***Universit\'e  de Lille (Sciences et Technologies), 
U.F.R. de Math\'ematiques, 59655 Villeneuve d'Ascq, Cedex, France,
e-mail: sukhov@math.univ-lille1.fr. The author is partially supported by Labex CEMPI.

\section{Introduction}

A real analytic Levi-flat set $M$ in $\C^{N}$ is a real analytic set such that its regular part is  a Levi-flat CR manifold of hypersurface type. 
An important special case (closely related to the theory of holomorphic foliations) arises when $M$ is a hypersurface. The local geometry of a Levi-flat hypersurface near its singular locus has been studied by several authors \cite{Bru1, Bru2, CerNeto, CerSad, Fe, Le, PSS, SS}. One of the main questions here concerns an extension of the Levi foliation of the regular part of $M$ as a (singular) holomorphic foliation (or, more generally, a singular holomorphic web) to a full neighbourhood of a singular point. The  existence of such an extension allows one  to use the holomorphic resolution of singularities results for the study of local geometry of singular Levi-flat hypersurfaces. 

The present paper is concerned with local properties of real analytic Levi-flat sets near their singularities. These sets arise in the study of Levi-flat hypersurfaces when lifted to the projectivization of the cotangent bundle of the ambient space. Our main result gives a complete characterization of dicritical singularities of such sets in terms of their Segre varieties.  Our method is a straightforward
generalization of arguments in~\cite{PSS,SS} where the case of Levi-flat hypersurfaces is considered.

\section{ Levi-flat subsets,  Segre varieties}

In this section we provide relevant background material on  real analytic Levi-flat  sets (of higher codimension)  and their Segre varieties. To the best of our knowledge this topic has not been considered in detail before; for 
convenience of the reader we provide some details.

\subsection{Real and complex analytic sets.}

Let $\Omega$ be a domain in $\C^{N}$. We denote by $z = (z_1,...,z_{N})$ the standard complex coordinates. A closed subset $M \subset \Omega$ is called {\it a real (resp. complex) analytic subset} in 
$\Omega$ if it is locally defined by a finite collection of real analytic (resp. holomorphic) functions. 

For a real analytic $M$ this means that for every point $q \in \Omega$ there exist a neighbourhood $U$ of $q$ and real analytic vector function 
$\rho = (\rho_1,...,\rho_k): U \to \R^k$ such that 
\begin{eqnarray}
\label{DefEq}
M \cap U = \rho^{-1}(0) = \{z \in U: \rho_j(z,\overline{z}) = 0,\ j = 1,...,k \} .
\end{eqnarray}
In fact, one can reduce the situation to the case $k=1$ by considering the defining function $\rho_1^2 + ...+ \rho_k^2$. Without loss of generality assume $q = 0$ and choose a neighbourhood $U$ in (\ref{DefEq}) in the form of a polydisc  
$\Delta(\varepsilon) = \{ z \in \C^{N}: \vert z_j \vert < \varepsilon \}$
 of radius $ \varepsilon > 0$.  Then, for $\varepsilon$ small enough, the (vector-valued) function $\rho$ admits the  Taylor  expansion 
 convergent in $U$:
\begin{eqnarray}\label{exp}
\rho(z,\overline z) = \sum_{IJ} c_{IJ}z^I \overline{z}^J, \ c_{IJ}\in\cx, \ \ I,J \in \N^N.
\end{eqnarray}
Here and below we use the multi-index notation $I = (i_1,....,i_N)$ and $\vert I \vert = i_1 +...+i_N$. The ($\C^k$-valued) coefficients $c_{IJ}$ satisfy the condition
\begin{eqnarray}
\label{coef}
\overline c_{IJ} = c_{JI},
\end{eqnarray}
since $\rho$ is a real ($\R^k$-valued) function.

An analytic subset $M$ is called {\it irreducible} if it cannot be represented as a union $M = M_1 \cup M_2$ where $M_j$ are analytic subsets of $\Omega$ different from $M$.  Similarly, an analytic subset is irreducible as a germ at a point $p \in M$ if its germ cannot be represented as a union of  germs of two real analytic sets. All considerations of the present paper are local and we 
always assume irreducibility of germs even if it is not specified explicitly.

A set $M$ can be decomposed into a disjoint union $M = M_{reg} \cup M_{sing}$, the regular and the singular part respectively. The regular part $M_{reg}$ is a nonempty and open subset of $M$. In the real analytic case we adopt the following convention: $M$ is a real analytic submanifold of maximal dimension in a neighbourhood of every point of $M_{reg}$. This dimension is called the dimension of $M$ and is denoted by $\dim M$. The set $M_{sing}$ is a real semianalytic subset of 
$\Omega$ of dimension $< \dim M$. Unlike complex analytic sets, for a real analytic~$M$, the set $M_{sing}$ may contain manifolds of smaller dimension which are not in the closure of $M_{reg}$, as seen in the classical example of the Whitney umbrella. Therefore, in general $M_{reg}$ is not dense in $M$.

Recall that the dimension of a complex analytic set $A$ at a point $a \in A$ is defined as 
$$
\dim_a A : = \underset{{A_{reg} \ni z \to a}}{\overline{\lim}} \dim_z A ,
$$ 
and that the the function $z \mapsto \dim_z A$ is upper semicontinuous. Suppose that $A$ is an irreducible  complex analytic subset of a domain $\Omega$ and let $F: A \to X$  be a  holomorphic mapping into some complex manifold $X$. The local dimension of $F$ at a point $z\in A$ 
is defined as $\dim_{z} F = \dim A - \dim_{z} F^{-1}( F(z))$ and the dimension of $F$ is set to be $\dim F = \max_{z \in A}\dim_{z} F$. Note that the equality $\dim_{z} F = \dim F$ holds on a Zariski open subset of 
$A$, and that  $\dim F$ coincides with the rank of the map~$F$, see \cite{Ch}.

\subsection{Complexification and Segre varieties.}
Let $M$ be the germ at the origin of an irreducible real analytic subset of $\C^{N}$ defined by~\eqref{exp}. 
We are interested in the geometry of $M$ in an arbitrarily small neighbourhood of $0$. We may consider a sufficiently 
small open neighbourhood $U$ of the origin and a representative of the germ which is also irreducible, see \cite{N} for 
details. In what follows we will not distinguish between the germ of $M$ and its particular representative in a suitable 
neighbourhood of the origin.

Denote by $J$ the standard complex structure of $\C^{N}$ and consider the opposite structure $-J$.
Consider the space $\C^{2N}_\bullet := (\C^{N}_z,J) \times (\C^{N}_w,-J)$ and the diagonal
$$
\Delta = \left\{ (z,w) \in \C^{2N}_\bullet : z=w \right\} .
$$
The set $M$ can be lifted to $\C^{2N}_\bullet$ as the real analytic subset 
$$
\hat{M} := \left\{ (z,z) \in \C^{2N}_\bullet : z \in M \right\} .
$$
There exists a unique irreducible complex analytic subset $M^{\C}$ in $\C^{2N}_\bullet$ of complex dimension equal
to the real dimension of $M$ such that 
$\hat{M} = M^{\C} \cap \Delta$ (see \cite{N}). The set $M^{\C}$ is called the {\it complexification} of $M$. The 
antiholomorphic involution 
$$
\tau: \C^{2N}_\bullet \to \C^{2N}_\bullet, \,\,\, \tau: (z,w) \mapsto (w,z)
$$  
leaves $M^\C$ invariant and $\hat{M}$ is the set of fixed points of $\tau\vert_{M^{\mathbb C}}$.

The complexification $M^\C$ is equipped with two canonical  holomorphic projections $\pi_z:(z,w) \mapsto z$ and $\pi_w:(z,w) \mapsto w$. We always suppose by convention that the domain of these projections is $M^\C$. The triple $(M^\C,\pi_z,\pi_w)$ is represented by the following diagram 

\begin{center}
\begin{tikzpicture}[description/.style={fill=white,inner sep=2pt}]
    \matrix (m) [matrix of math nodes, row sep=3em,
    column sep=2em, text height=2ex, text depth=0.25ex]
    {  & M^{\C} & \\
     (\C^{N},J)  & & (\C^{N},-J) \\
    };
       \path[->] (m-1-2) edge  node[auto,swap] {$ \pi_z $}  (m-2-1);
       \path[->] (m-1-2) edge node[auto] {$ \pi_w $} (m-2-3);
\end{tikzpicture}
\end{center}
which leads to the central notion of the present paper. The {\it Segre variety} of a point $w \in \C^{N}$ 
is defined as 
$$
Q_w := (\pi_z \circ \pi^{-1}_w)(w) = \left\{ z \in \C^{N}: (z,w) \in M^\C\right\} .
$$
When $M$ is a hypersurface defined by (\ref{DefEq}) (with $k = 1$) this definition coincides with the usual definition $$Q_w = \left\{ z: \rho(z,\overline{w}) = 0 \right\}.$$
Of course, here we suppose that $\rho$ is a minimal function, that is, it generates the ideal of real analytic functions vanishing on $M$.

The following properties of Segre varieties are well-known for hypersurfaces.

\begin{prop}\label{SegreProp}
Let $M$ be the germ of a real analytic subset in $\mathbb C^N$. Then
\begin{itemize}
\item[(a)] $z \in Q_z \Longleftrightarrow z \in M$.
\item[(b)] $z \in Q_w \Longleftrightarrow w \in Q_z$,
\item[(c)] (invariance property) Let $M_1$ be a real analytic CR manifold, and $M_2$ be a real analytic germ 
in $\C^N$ and $\C^K$ respectively. Let  $p \in  M_1$, $q\in M_2$, and $U_1 \ni p$, $U_2 \ni q$ be small neighbourhoods. 
Let also 
$f:U_1 \to U_2$ be a holomorphic map such that $f(M_1 \cap U_1) \subset M_2 \cap U_2$. Then
$$f(Q_w^1) \subset Q^2_{f(w)}$$
for all $w$ close to $p$. If, in addition, $M_2$ is nonsingular and $f:U_1 \to U_2$ is biholomorphic, then $f(Q_w^1) = Q_{f(w)}^2$.
Here $Q_w^1$ and $Q_{f(w)}^2$ are Segre varieties associated with $M_1$ and $M_2$ respectively.
\end{itemize}
\end{prop}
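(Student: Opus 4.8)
The plan is to read parts (a) and (b) off directly from the construction of the complexification, and to prove the invariance property (c) by lifting $f$ to a holomorphic map of complexifications.

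For (a): by definition $z\in Q_z$ means $(z,z)\in M^\C$, and since $(z,z)$ always lies on $\Delta$ this is the same as $(z,z)\in M^\C\cap\Delta=\hat M$, i.e.\ $z\in M$; for a hypersurface with minimal defining function $\rho$ this is just the tautology $\rho(z,\bar z)=0\Leftrightarrow z\in M$. For (b): $z\in Q_w\Leftrightarrow(z,w)\in M^\C$ and $w\in Q_z\Leftrightarrow(w,z)\in M^\C$, and since $\tau(z,w)=(w,z)$ is an involution of $\C^{2N}_\bullet$ leaving $M^\C$ invariant, these two membership conditions are equivalent.

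For (c), the main observation is that a map $f$ holomorphic from $U_1\subset(\C^N,J)$ into $(\C^K,J)$ is automatically holomorphic from $U_1\subset(\C^N,-J)$ into $(\C^K,-J)$, since the relation $df\circ J=J\circ df$ is unaffected by $J\mapsto-J$. Hence
\[
G:\ U_1\times U_1\longrightarrow U_2\times U_2,\qquad G(z,w)=(f(z),f(w)),
\]
is holomorphic as a map $\C^{2N}_\bullet\to\C^{2K}_\bullet$. For $z\in M_1\cap U_1$ one has $G(z,z)=(f(z),f(z))$ with $f(z)\in M_2\cap U_2$, so $G(\hat M_1)\subset\hat M_2\subset M_2^\C$. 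Since $M_1$ is a real analytic submanifold, near $(p,p)$ its complexification $M_1^\C$ is a connected complex manifold, and $\hat M_1$, being the fixed-point set of the antiholomorphic involution $\tau|_{M_1^\C}$, is a totally real submanifold of it of maximal dimension. Choosing holomorphic functions $h_1,\dots,h_r$ that cut out $M_2^\C$ near $(f(p),f(p))$, each $h_i\circ G$ is holomorphic on $M_1^\C$ near $(p,p)$ and vanishes on $\hat M_1$; as a holomorphic function on a connected complex manifold that vanishes on a maximal totally real submanifold must vanish identically, we get $h_i\circ G\equiv 0$ on $M_1^\C$, that is, $G(M_1^\C)\subset M_2^\C$. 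Consequently, if $z\in Q_w^1$, i.e.\ $(z,w)\in M_1^\C$, then $(f(z),f(w))=G(z,w)\in M_2^\C$, which says exactly that $f(z)\in Q_{f(w)}^2$; so $f(Q_w^1)\subset Q_{f(w)}^2$ for $w$ near $p$.

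For the last assertion, when $M_2$ is nonsingular and $f$ is biholomorphic we may, after shrinking the neighbourhoods, assume $f(M_1\cap U_1)=M_2\cap U_2$, and then apply the inclusion just proved to $f^{-1}$ as well (with the now-nonsingular $M_2$ in the role of $M_1$), obtaining $f^{-1}(Q_{f(w)}^2)\subset Q_w^1$; combined with the previous containment this gives equality. The only step with genuine content is the identity-principle fact used in (c) — equivalently, that the maximal totally real submanifold $\hat M_1$ is not contained in any proper complex analytic subset of $M_1^\C$ — together with the routine verification that $M_1^\C$ is smooth along $\hat M_1$ and that $\hat M_1$ sits inside it as such a submanifold. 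The rest is formal manipulation with the two opposite complex structures, where I expect no real difficulty.
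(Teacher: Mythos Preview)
Your arguments for (a) and (b) are exactly those of the paper.

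For (c) you take a genuinely different route. The paper works at the level of defining functions: writing $\phi_s(f(z),\overline{f(z)})=\sum_j\lambda_{sj}\rho_j$ on $M_1$, it first treats the case where $M_1$ is \emph{generic} (so that $M_1$ is a uniqueness set and one may replace $\bar z$ by an independent variable $\bar w$), and then reduces the general CR case to the generic one by writing $M_1$ as a graph over a generic submanifold $\tilde M_1$. Your proof bypasses this dichotomy entirely: you lift $f$ to $G(z,w)=(f(z),f(w))$, observe that $G$ is holomorphic for the structure $(J,-J)$ on both sides, and then use that $\hat M_1$ is maximally totally real in the smooth complex manifold $M_1^\C$ to conclude $G(M_1^\C)\subset M_2^\C$ by the identity principle. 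This is correct and, in a sense, more conceptual: the ``generic versus non-generic'' split in the paper is precisely what is needed to make the complexification step go through at the level of equations, whereas you invoke it geometrically in one stroke. The paper's approach, on the other hand, is more concrete and makes visible the relation between the defining ideals, which is sometimes useful.

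One small remark on the final assertion: to run your argument for $f^{-1}$ you need $f(M_1\cap U_1)=M_2\cap U_2$ after shrinking, which you simply assert. This is harmless when $\dim M_1=\dim M_2$ (then $f(M_1)$ is open in the manifold $M_2$), but strictly speaking needs that caveat; the paper does not spell out the equality case either.
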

\begin{proof} (a) Note that  $z \in Q_z$ if and only if $(z,z) \in M^\C$, which is equivalent to $(z,z) \in \hat M$.

(b) The relation $(z,w) \in M^\C$ holds if and only if $\tau(z,w) = (w,z) \in M^\C$.

(c) Suppose that $M_1$ is defined near $p$ by the equations $\rho_1 = ...=\rho_k = 0$ and 
$d\rho_1 \wedge ... \wedge d\rho_k \neq 0$. Similarly, suppose that $M_2$ is defined by the equations 
$\phi_1 = ... = \phi_l = 0$.  
Then the Segre varieties are respectively given by 
$Q_w^1 = \{ z: \rho_j(z,\overline{w}) = 0,\ j = 1,...,k\}$ and $Q_w^2 = \{ z: \phi_s (z,\overline{w}) = 0, \ s=1,...,l \}$. By assumption we have 
$\phi_s(f(z),\overline{f(z)}) = 0$ when $z \in M_1$. This implies that there exist real analytic functions $\lambda_j$, $j=1,...,k$, such that 
$$
\phi_s(f(z),\overline{f(z)}) = \sum_1^k \lambda_{sj}(z,\overline{z})\rho_j(z,\overline{z}) .
$$

Consider first the case where $M_1$ is a generic manifold, that is, $\partial \rho_1 \wedge ... \wedge \partial \rho_k\neq 0$.
Let $f^*$ be a holomorphic function such that $f^*(\overline{w}) = \overline{f(w)}$. Since $M_1$ is generic, it is the 
uniqueness set for holomorphic functions. Therefore, by analyticity we have
$$
\phi_s(f(z),f^*(\overline{w})) = \sum_1^k \lambda_{sj}(z,\overline{w})\rho_j(z,\overline{w}),
$$
for all $z$ and $w$. This concludes the proof  for the case when $M_1$ is generic.

Let now $M_1$ be a CR manifold which is not generic. Then, by real analyticity, $M_1$ can be represented as the graph of (the restriction of) a holomorphic (vector) function over a real analytic generic manifold $\tilde M_1$ of real codimension $l$ in $\C^d$, for some $l$ and $d$. More precisely, set $z = (z',z'')$, $z' = (z_1,...,z_d)$, $z'' = (z_{d+1},...,z_N)$.
Then $\tilde M_1 = \{ z': \psi_j(z',\overline{ z'}) =0,\ j=1,...,l \}$ and $M_1 = \{ (z',z''): z' \in N_1, z'' = g(z') \}$, where $\psi_j$ are real analytic functions and $g$ is a holomorphic (vector) function. Then every Segre variety $Q_w^1$ of $M_1$ is the graph of $g$ over the Segre variety of $\tilde M_1$. Indeed,
$Q_w^1 = \{ (z',z''): \phi_j(z',\overline{w'}) = 0, j= 1,...,l, z'' = g(z') \}$. The holomorphic map $\tilde f(z') = f(z',g(z'))$ transforms the generating manifold $\tilde M_1$ to $M_2$. Since we already proved the result for generic submanifolds in the 
source, we conclude that the map $\tilde f$ transforms Segre varieties of the  manifold $\tilde M_1$ to Segre varieties of the manifold 
$M_2$. This implies the required statement.
\end{proof}

\subsection{Levi-flat sets}

We say that an irreducible real analytic set $M\subset \mathbb C^{n+m}$ is {\it Levi-flat} if $\dim M = 2n-1$ and $M_{reg}$ is locally foliated by complex manifolds of complex dimension $n-1$. In particular, $M_{reg}$ is a CR manifold of hypersurface type. 
The most known case arises when  $m=0$, i.e., when $M$ is a Levi-flat hypersurface in $\C^n$. 

We use the notation $z'' = (z_{n+1},...,z_{n+m})$, and similarly for the $w$ variable.
It follows from the Frobenius theorem and the implicit function theorem that for every 
point  $q \in M_{reg}$ there exist an open neighbourhood $U$ and a local biholomorphic 
change of coordinates $F: (U,q) \to (F(U),0)$ such that $F(M)$ has the form 
\begin{eqnarray}
\label{NormForm1}
\{ z \in F(U): z_n + \overline{z}_n = 0, \ z'' = 0 \} .
\end{eqnarray}
The subspace $F(M)$ is foliated by complex affine subspaces $L_c = \{z_n= ic,\ z'' = 0, \ c \in  \R\}$,
which gives a foliation of $ M_{reg} \cap U$ by complex submanifolds $F^{-1}(L_c)$. This defines a 
foliation on $M_{reg}$ which is called {\it the Levi foliation} and denoted  by~$\mathcal L$. Every leaf of $\mathcal L$ is 
tangent to the complex tangent space of $M_{reg}$.  The complex affine subspaces 
\begin{eqnarray}
\label{SegrePlane}
\left\{  z_n = c, z'' = 0 \right\}, \,\,\ c \in \C, 
\end{eqnarray}
in local coordinates given by
(\ref{NormForm1}) are precisely the Segre varieties of $M$ for every complex $c$. Thus, the Levi foliation is closely related to 
Segre varieties. The complexification $M^\C$ is given by 
\begin{eqnarray}\label{compl}
M^\C = \{ (z,w): z_n + \overline{w}_n = 0, z'' =0, w'' = 0 \}.
\end{eqnarray}
For $M$ defined by (\ref{NormForm1}) its Segre varieties (\ref{SegrePlane}) 
fill the complex subspace $z'' = 0$ of $\C^{n+m}$. In particular, if $w$ is not in this subspace, then 
$Q_w$ is empty. 

In arbitrary coordinates, in a neighbourhood $U \subset \C^{n+m}(z)$ of a regular point $z^0 \in M$ the Levi flat set is the transverse intersection of a real analytic hypersurface with a complex n-dimensional manifold, that is 
\begin{eqnarray}
\label{compl1}
M =  \{ z \in U: h_j(z) = 0, \ j=1,...,m , \,\,  r(z,\overline z) = 0 \} .
\end{eqnarray}
Here $h_j$ are functions holomorphic on $U$ and $r:U \longrightarrow \R$ is a real analytic function.
Furthermore, $\partial r \wedge dh_1 \wedge...\wedge dh_m \neq 0$. Then 

\begin{eqnarray}
\label{compl2}
M^\C = \{ (z,w) \in U \times U: r(z,\overline w ) = 0, h_j(z) = 0, h_j(\overline w) = 0, j=1,...,m \}
\end{eqnarray}
in a neighbourhood $U \times U$ of $(z^0,{\overline z}^0)$.

We need to study some general properties of projections $\pi_z$ and $\pi_w$. Let $\pi$ be one of the projections $\pi_z$ or $\pi_w$. Introduce 
the dimension of $\pi$ by setting $\dim \pi = \max_{(z,w) \in M^\C}\dim_{(z,w)} \pi$.
If $M$ is irreducible as a germ, then so is $M^\C$ (see~\cite[p.92]{N}). Hence, $(M^\C)_{reg}$ is a connected complex manifold of dimension $2n-1$. Then the equality $\dim_{(z,w)} \pi = \dim \pi$ holds on a Zariski open set 
\begin{equation}\label{e.mstarc}
M^\C_*: = (M^\C \setminus X) \subset (M^\C)_{reg},
\end{equation}
where $X$ is a complex analytic subset of dimension $< 2n-1$. Here $\dim \pi$ coincides with the rank of $\pi \vert_{M^\C_*}$.
Furthermore, $\dim (\pi \vert (M^\C)_{sing}) \le \dim \pi$.

\begin{lemma}
\label{dim1}
Let $\pi$ be one of the projections $\pi_z$ or $\pi_w$.
\begin{itemize}
\item[(a)] We have $\dim \pi = n$.
\item[(b)] The image $\pi(M^\C)$ is contained in the (at most) countable union of complex analytic sets of dimension $\le n$.
\end{itemize}
\end{lemma}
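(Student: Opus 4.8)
The plan is to handle $\pi_z$ and $\pi_w$ on the same footing, to prove part (a) by a local computation at a regular point of $M$ together with a density argument on the complexification, and then to obtain part (b) from part (a) via the rank theorem.

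For part (a), I would first pass, near any $z^0\in M_{reg}$, to coordinates in which $M$ has the normal form \eqref{NormForm1}; then near $(z^0,z^0)$ the complexification $M^\C$ is the smooth $(2n-1)$-dimensional manifold \eqref{compl}, i.e. $\{z_n+\overline w_n=0,\ z''=0,\ w''=0\}$, on which $\pi_z$ is the map $(z,w)\mapsto z$. Its image germ is the $n$-dimensional plane $\{z''=0\}$ and its fibre over such a point is $\{w:\ \overline w_n=-z_n,\ w''=0\}\cong\C^{n-1}$, so that $\dim_{(z^0,z^0)}\pi_z=(2n-1)-(n-1)=n$ at every point lying over $M_{reg}$; in particular $\dim\pi_z\ge n$. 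The real work is the reverse inequality, and this is the step I expect to be the main obstacle: a priori the maximal local dimension of $\pi_z$ could be attained only at points of $M^\C$ lying over singular or degenerate points, where no such normal form is available. I would circumvent this by showing that the maximum is already attained over $M_{reg}$. Recall that $\dim\pi_z$ is attained precisely on the Zariski open set $M^\C_*=M^\C\setminus X$ of \eqref{e.mstarc}, with $X$ complex analytic of dimension $<2n-1$. The key observation is that, near each point $(z^0,z^0)$ with $z^0\in M_{reg}$, the set $\hat M$ is a totally real submanifold of the complex manifold $(M^\C)_{reg}$ of real dimension $2n-1=\dim_\C M^\C$: for $v\in T_{z^0}M$ the vector $(v,v)$ is tangent to $\hat M$, whereas its image $(Jv,-Jv)$ under the complex structure $(J,-J)$ of $\C^{2N}_\bullet$ is tangent to $\hat M$ only if $v=0$. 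Since a maximally totally real submanifold of an irreducible complex analytic set of dimension $2n-1$ cannot be contained in a proper complex analytic subset of it (the smallest analytic set containing it must have complex dimension $\ge 2n-1$, hence equals the whole set), the portion of $\hat M$ over $M_{reg}$ is not contained in $X$ and therefore meets $M^\C_*$. At such a point $(z^0,z^0)$ one has $\dim\pi_z=\dim_{(z^0,z^0)}\pi_z=n$. Finally $\dim\pi_w=n$ follows by applying the antiholomorphic involution $\tau$, which preserves $M^\C$ and interchanges $\pi_z$ and $\pi_w$, hence preserves all the dimensions in question.

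For part (b), I would deduce it from part (a) and the general principle that the image of a holomorphic map $F\colon A\to X$ from an irreducible complex analytic set $A$ is contained in an at most countable union of complex analytic subsets of $X$ of dimension $\le\dim F$. One proves this by induction on $\dim A$: on the Zariski open subset of $A$ where $F$ has maximal rank $\dim F$, the constant rank theorem realizes $F$ locally as a submersion onto a $(\dim F)$-dimensional submanifold, and by second countability of $A$ countably many such local pieces cover it; the complement is a complex analytic set of strictly smaller dimension, each of whose irreducible components $A_i$ satisfies $\dim(F|_{A_i})\le\dim F$ because passing to an analytic subset cannot increase the generic rank, so the inductive hypothesis applies to $F|_{A_i}$. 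Applying this to $F=\pi_z$ and $F=\pi_w$, with $\dim F=n$ from part (a), gives assertion (b). The basic facts on dimensions of holomorphic maps used throughout are those recorded in \cite{Ch}.
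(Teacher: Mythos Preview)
Your proof is correct and follows the same approach as the paper: compute $\dim\pi$ locally near a regular point of $M$ using the explicit description of $M^\C$ there, and then deduce (b) from (a) via the general fact from \cite{Ch}. The only difference is cosmetic: the paper simply observes that \eqref{compl2} (equivalently \eqref{compl}) describes an \emph{open} piece of $M^\C$, which automatically meets the Zariski-dense locus $M^\C_*$ where $\dim\pi$ is attained, so your total-reality argument showing that $\hat M$ over $M_{reg}$ is not contained in $X$---while correct---is more than is needed, and your stated worry that the maximum might be attained only over singular points cannot actually occur.
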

\begin{proof}
(a) Consider the case where $\pi = \pi_w$. In view of (\ref{compl2}) 
the image  of an open  neighbourhood a regular point 
$(z^0,{\overline z}^0)$ in $M^\C$  coincides with the  complex $n$-dimensional manifold  
$\{w:  h_j(\overline w) = 0, j=1,...,m \}$. This implies (a).

(b) This is a consequence of (a), see \cite{Ch}.
\end{proof}

It follows from the lemma above that generically, i.e., for $w \in \pi_w(M^\C_*)$, the complex analytic set $Q_w$ has 
dimension $n-1$, and that $Q_w$ can have dimension $n$ if $(z,w)\notin M^\C_*$. 
Of course, $Q_w$ is empty if $w$ does not belong to  $ \pi_w(M^\C)$.

A singular point $q \in M$ is called {\it Segre degenerate} if $\dim Q_q = n$. 
Note that the set of Segre degenerate points is contained in a complex analytic subset of dimension $n-2$. The proof,
which we omit, is quite similar to that in \cite{SS}, where this claim is established for hypersurfaces.

Let $q \in M_{reg}$. Denote by ${\mathcal L}_q$ the leaf of the Levi foliation through $q$. Note that
by definition this is a connected complex submanifold of complex dimension $n-1$ that is closed in $M_{reg}$. 
Denote by $M_* \subset M_{reg}$ the image of $\hat M \cap M^\C_*$ under the projection $\pi$,
where $M^\C_*$ is defined as in~\eqref{e.mstarc}. This set coincides with $M_{reg} \setminus A$ for some proper 
complex analytic subset $A$.

As a simple consequence of Proposition \ref{SegreProp} we have quite similarly 
to~\cite{SS} the following.

\begin{cor}
\label{Segre+Levi}
Let  $a \in M_*$. Then the following holds:
\begin{itemize}
\item[(a)]  The leaf ${\mathcal L}_a$ is contained in a unique irreducible component $S_a$ of 
$Q_a$ of dimension $n-1$. In particular, $Q_a$ is a nonempty complex analytic set of pure dimension $n-1$. In a small neighbourhood $U$ of $a$ the intersection $S_a \cap U$ is also a unique complex submanifold of complex dimension $n-1$ through $a$ which is contained in $M$.
\item[(b)] For every $a \in M_*$ the complex variety $S_a$ is contained in $M$;
\item[(c)] For every $a, b \in M_*$ one has $b \in S_a \Longleftrightarrow S_a = S_b$.
\item[(d)] Suppose that $a \in M_*$ and ${\mathcal L}_a$ touches a point  $q \in M$  (the point $q$  may be singular). Then  $Q_q$ contains $S_a$. If, additionally, $\dim_\C Q_q = n-1$, then $S_a$ is  an irreducible component of~$Q_q$. 
\end{itemize}
\end{cor}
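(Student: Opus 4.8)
The plan is to deduce everything from the invariance property of Proposition \ref{SegreProp}(c) applied locally near a point $a \in M_*$, together with the normal form \eqref{NormForm1} and the fact that near such $a$ the Segre variety has the expected dimension $n-1$. For part (a), I would start at a regular point $a \in M_* \subset M_{reg}$, pass to coordinates in which $M$ has the form \eqref{NormForm1} and the leaf ${\mathcal L}_a$ through $a$ is the Segre plane $\{z_n = c,\ z''=0\}$ for the appropriate $c$; by \eqref{SegrePlane} this plane is exactly $Q_a$ in these coordinates, so in a neighbourhood $U$ of $a$ we get that $Q_a \cap U$ is a smooth $(n-1)$-dimensional complex manifold coinciding with ${\mathcal L}_a \cap U$, and it is the unique germ of such a manifold through $a$ contained in $M$ (any complex $(n-1)$-manifold in $M$ near a regular point must be tangent to, hence contained in, the leaf, by the defining equation $z_n+\overline z_n=0$). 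Since $a \in M_*$ means $(a,\overline a) \in M^\C_*$, Lemma \ref{dim1} and the discussion after it guarantee $\dim Q_a = n-1$ globally, so the local $(n-1)$-dimensional manifold extends to a unique irreducible component $S_a$ of $Q_a$ of dimension $n-1$; purity follows because no component can have dimension $n$ at a point of $M^\C_*$.

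For part (b), I would argue that $S_a$ is contained in $M$ by analytic continuation of the containment ${\mathcal L}_a \cap U \subset M$: the set of points of $S_a$ admitting a neighbourhood in which $S_a \subset M$ is open and closed in $S_a$ and nonempty, using that along $S_a$ one can propagate the normal form and the fact that through each nearby regular point the Segre component stays inside $M$ — this is the same continuation-along-the-leaf argument as in \cite{SS}. Part (c) is then the symmetry relation: if $b \in S_a$, then by (b) $b \in M$, and picking $b \in M_*$ we can apply Proposition \ref{SegreProp}(b) ($z \in Q_w \Leftrightarrow w \in Q_z$) to get $a \in Q_b$; matching dimensions and irreducible components (both $S_a$ and $S_b$ are the unique $(n-1)$-dimensional component through the common point) forces $S_a = S_b$, and the converse is immediate since $b \in S_b = S_a$.

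For part (d), suppose ${\mathcal L}_a$ accumulates at a possibly singular point $q \in M$, i.e. $q \in \overline{S_a}$. Take a sequence $a_k \in {\mathcal L}_a$ with $a_k \to q$; each $a_k$ lies in $M_*$ (after shrinking, since $M_* = M_{reg}\setminus A$ and the leaf meets $M_*$ in a set whose complement is thin in the leaf) and by (c) $S_{a_k} = S_a$. Now $(a_k,\overline{a_k}) \in M^\C$ and $a_k \in S_{a_k} = S_a$ means $S_a \times \{\overline{a_k}\}$, more precisely the pairs $(z,\overline{a_k})$ with $z \in S_a$, lie in $M^\C$; letting $k \to \infty$ and using that $M^\C$ is closed, the pairs $(z,\overline q)$ with $z \in S_a$ lie in $M^\C$, i.e. $S_a \subset Q_q$. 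Finally, if in addition $\dim_\C Q_q = n-1$, then $S_a$, being an irreducible $(n-1)$-dimensional subset contained in $Q_q$, must be an irreducible component of $Q_q$.

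The main obstacle I anticipate is part (b), specifically making the analytic-continuation-along-the-leaf argument rigorous: one must check that the local description \eqref{NormForm1} can be propagated along all of $S_a$ (not just along the original leaf), controlling the behaviour where $S_a$ might approach the thin set $A$ or $M_{sing}$, and that the property "$S_a \subset M$ locally" is genuinely closed. This is exactly the delicate point handled in \cite{SS} for hypersurfaces, and the claim in the paper is that the argument generalizes verbatim; I would lean on that, citing \cite{SS}, rather than reproducing the continuation in full detail.
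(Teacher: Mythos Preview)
Your treatment of (a) and (d) follows the paper's approach closely enough; your limit argument in (d) for $S_a \subset Q_q$ is a correct variant of what the paper does (the paper instead observes that $q \in \overline{\mathcal L_a} \subset S_a \subset Q_a$ gives $a \in Q_q$ by symmetry, and the same for every nearby $a' \in \mathcal L_a$, so an open piece of $S_a$ lies in $Q_q$, hence all of $S_a$ by irreducibility).

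The substantive divergence is in (b), and here you are making the problem much harder than it is. The paper's proof of (b) is one line: $S_a$ is an irreducible complex analytic set, and near $a$ it is contained in $M = \{\rho = 0\}$ by part (a); hence the real analytic function $\rho|_{S_a}$ vanishes on a nonempty open subset of the irreducible set $S_a$, so $\rho|_{S_a} \equiv 0$ by the identity principle, i.e.\ $S_a \subset M$ globally. No continuation along the leaf, no control near $A$ or $M_{sing}$, no open-and-closed argument, and no appeal to \cite{SS} is needed. What you flag as ``the main obstacle'' is in fact the trivial step.

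For (c), your argument via the symmetry relation is incomplete as written: from $b \in Q_a$ and $a \in Q_b$ alone you cannot conclude $S_a = S_b$, because $S_a$ and $S_b$ are components of \emph{different} Segre varieties, so ``the unique $(n-1)$-dimensional component through the common point'' does not refer to a single well-defined object. The paper's argument is shorter and uses only (a) and (b): since $S_a \subset M$ by (b), near $b$ the set $S_a$ is a complex $(n-1)$-dimensional submanifold through $b$ contained in $M$; by the uniqueness assertion in (a) this forces $S_a = S_b$ in a neighbourhood of $b$, hence globally since both are irreducible.
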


\begin{proof} 
(a) We first make an elementary but important observation. Suppose that $M$ is a representative in a domain $U$ of the germ 
of a real analytic set $\{\rho = 0\}$. Let $a\in M$ and $V$ be a neighbourhood of $a$, $V\subset U$. Suppose that we used a different function $\tilde \rho$ to define $M\cap V$, i.e., $M\cap V = \{\tilde\rho = 0\}$. Applying Proposition~\ref{SegreProp} 
to the inclusion map $V \hookrightarrow U$ we conclude that the Segre varieties of $M\cap V$ defined by complexifying $\tilde\rho$
are contained in the Segre varieties of $M$ defined by complexifying the function $\rho$. More precisely, the Segre varieties with
respect to $\tilde\rho$ are coincide with the intersection with $V$ of some components of Segre varieties with respect to $\rho$.

In view of the invariance of the Levi form under biholomorphic maps, the Levi foliation is an intrinsic notion, i.e., it is
independent of the choice of (local) holomorphic coordinates. Similarly, in view of the biholomorphic invariance of Segre 
varieties described in Proposition~\ref{SegreProp}(c), these are also intrinsic objects. There exist a small neighbourhood 
$U$ of $a$ and a holomorphic map  which takes $a$ to the origin and is one-to-one between $U$ and a neighbourhood $U'$ 
of the origin, such that the image of $M$ has the form (\ref{NormForm1}). Hence, without loss of generality we may assume  
that $a = 0$ and may view (\ref{NormForm1}) as a representation of $M \cap U$ in the above local coordinates. Then 
$Q_0 \cap U = \{ z_n = 0, z'' = 0 \}$. Hence, going back to the original coordinates, we obtain, by the invariance of Segre 
varieties, that the intersection $Q_a \cap U$ is a  complex submanifold of dimension $n-1$ in $M \cap U$ which coincides 
with ${\mathcal L}_a \cap U$. In particular, it belongs to a unique irreducible component of $Q_a$ of dimension $n-1$. It follows also from (\ref{NormForm1}) that it is a unique complex submanifold of dimension $n-1$ through $a$ contained in a 
neighbourhood of $a$ in $M$.

(b) Recall that we consider $M$ defined by (\ref{DefEq}). Since $S_a$ is contained in $M$ near $a$, it follows by analyticity of $\rho$ and the uniqueness  that 
$\rho \vert_{S_a} \equiv 0$, i.e., $S_a$ is contained in $M$.

(c) By part (b), the complex submanifold $S_a$ is contained in $M$. Therefore, in a small neighbourhood of  $b$ we have $S_a = S_b$ 
by part (a). Then also globally $S_a = S_b$ by the uniqueness theorem for irreducible complex analytic sets.

(d) Since $q \in Q_a$, we have $a \in Q_q$. The same holds for every point $a'\in{\mathcal L}_a$ in a neighbourhood of $a$. 
Hence, $S_a$ is contained in $Q_q$ by the uniqueness theorem for complex analytic sets. Suppose now that 
$\dim_{\C} Q_q = n-1$.  Since $a$ is a regular point of $M$,  the leaf ${\mathcal L}_a$ is not contained in the set of singular points of $M$; hence, regular points of 
$M$ form an open dense subset in this leaf. Consider a sequence of points 
$q^m \in {\mathcal L}_a \cap M_{reg}$ converging to $q$. It follows by (c) that the complex 
$n-1$-submanifold  $S_a = S_{q^m}$ is independent of $m$  
and by (a) $S_{q^m}$ is an irreducible component of $Q_{q^m}$. Passing to the limit we obtain that  $S_a$ is contained in $Q_q$ as an 
irreducible component.
\end{proof}


\section{Dicritical singularities of Levi-flat subsets}

Let $M$ be a real analytic Levi-flat subset of dimension $2n-1$  in $ \C^{n+m}$. A singular point $q\in M$ is called 
{\it dicritical} if $q$ belongs to  infinitely many geometrically different leaves ${\mathcal L}_a$. 
Singular points which are not dicritical are called {\it nondicritical}.  
Our main result is the following.

\begin{theorem}\label{DicritTheo}
Let $M$ be a real analytic Levi-flat subset of dimension $2n-1$ in  $\C^{n+m}$, irreducible as a germ at  $0 \in \overline{M_{reg}}$ . Then $0$ is a dicritical point if and only if  $\dim_\C Q_0 = n$.
\end{theorem}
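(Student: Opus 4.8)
The plan is to establish the two implications separately, using the correspondence between leaves of the Levi foliation and components of Segre varieties set up in Corollary~\ref{Segre+Levi}.

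\emph{Sufficiency ($\dim_\C Q_0 = n$ implies $0$ is dicritical).} Here I would argue as follows. Since $0 \in \overline{M_{reg}}$, there is a sequence $a^k \in M_*$ with $a^k \to 0$. By Corollary~\ref{Segre+Levi}(a), each leaf ${\mathcal L}_{a^k}$ lies in a component $S_{a^k}$ of $Q_{a^k}$ of dimension $n-1$, and by (b), $S_{a^k} \subset M$. Passing to a subsequence, the $(n-1)$-dimensional analytic sets $S_{a^k}$ converge (in the sense of local chains / Hausdorff convergence on a fixed neighbourhood of $0$) to an analytic set $T$ through $0$ of dimension $n-1$ contained in $M$. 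The key point is that each $S_{a^k}$ passes through $0$ as well: indeed, because $\dim_\C Q_0 = n$, the germ at $0$ of $Q_0$ is an $n$-dimensional set that (by Proposition~\ref{SegreProp}(b) and the reflection $z \in Q_w \Leftrightarrow w \in Q_z$) "absorbs" nearby Segre varieties — more precisely one shows that for $a$ near $0$ in $M_*$, the component $S_a$ is forced to accumulate to $0$, so the leaves ${\mathcal L}_{a^k}$ already give infinitely many geometrically distinct leaves touching $0$. To make the quantitative heart of this work I would pass to the normal form and a dimension count: if only finitely many leaves touched $0$, their union would be an analytic set of dimension $n-1$ near $0$, forcing $Q_0$ to have dimension $\le n-1$, a contradiction. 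This reduction — showing that infinitely many leaves touch $0$ is \emph{equivalent} to $Q_0$ being "too big" — is where I expect most of the real work to be.

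\emph{Necessity ($0$ dicritical implies $\dim_\C Q_0 = n$).} Suppose $0$ lies on infinitely many geometrically distinct leaves. Each such leaf ${\mathcal L}_a$ (with $a \in M_*$, which we may assume after perturbing along the leaf since $M_{reg}$ is dense in each leaf) is contained in $S_a$, and by Corollary~\ref{Segre+Levi}(d), since ${\mathcal L}_a$ touches $0$, we get $S_a \subset Q_0$. Thus $Q_0$ contains infinitely many geometrically distinct $(n-1)$-dimensional complex analytic sets $S_a$. A complex analytic set of dimension $n-1$ has only finitely many irreducible components, so $Q_0$ cannot have dimension $n-1$; since $\dim_\C Q_0 \le n$ always (by Lemma~\ref{dim1}, as $Q_0 = \pi_z(\pi_w^{-1}(0))$ and fibres of $\pi_w$ have dimension $\le n-1$ generically, bumping to $n$ only on the exceptional set), and $Q_0 \supsetneq$ a positive-dimensional set so is not lower-dimensional either, we conclude $\dim_\C Q_0 = n$. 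The point needing care is the passage from "infinitely many leaves" to "infinitely many distinct components $S_a$ of $Q_0$": a priori different leaves ${\mathcal L}_a$, ${\mathcal L}_b$ could lie in the same component of $Q_0$. But by Corollary~\ref{Segre+Levi}(c), for $a,b \in M_*$ one has $S_a = S_b$ iff $b \in S_a$ iff ${\mathcal L}_a$ and ${\mathcal L}_b$ are (analytic continuations of) the same leaf; so geometrically distinct leaves do give distinct $S_a$'s, and the argument closes.

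\textbf{Main obstacle.} The principal difficulty is the sufficiency direction: turning the analytic statement "$Q_0$ has the maximal dimension $n$" into the dynamical statement "infinitely many leaves pass through $0$". The natural mechanism is a limiting argument on the family $\{S_a : a \in M_*, a \to 0\}$ of $(n-1)$-dimensional analytic subsets of $M$, combined with the symmetry of Segre varieties to show these limits genuinely pass through $0$ and are pairwise distinct. Controlling this family — ensuring the limits do not degenerate and that the count of distinct limits is infinite rather than finite — is the technical crux, and it is exactly here that one invokes the structure of $M^\C$ and its projections (Lemma~\ref{dim1} and the stratification~\eqref{e.mstarc}), paralleling the hypersurface arguments of~\cite{PSS,SS}.
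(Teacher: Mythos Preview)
Your necessity argument is correct and is exactly what the paper does: by Corollary~\ref{Segre+Levi}(d), each leaf through $0$ contributes an $(n-1)$-dimensional irreducible component $S_a \subset Q_0$, distinct leaves give distinct $S_a$ by part~(c), and an $(n-1)$-dimensional analytic set has only finitely many components.

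The sufficiency direction, however, has a genuine gap, and your sketch does not match the paper's route. Your two proposed mechanisms both break down. First, the claim ``if only finitely many leaves touched $0$, their union would be $(n-1)$-dimensional, forcing $\dim_\C Q_0 \le n-1$'' is not justified: $Q_0$ is \emph{not} the union of leaves through $0$, and there is no a priori reason its dimension is controlled by that union. Second, the limiting argument on $S_{a^k}$ does not, by itself, show that the $S_{a^k}$ pass through $0$; the ``absorption'' you allude to is precisely the content to be proved, and Hausdorff limits of the $S_{a^k}$ need not coincide with, or be contained in, the leaves one is after.

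The paper proceeds quite differently and on the complexification. The first key step (Lemma~\ref{Q0}, due to Brunella) is that $\dim_\C Q_0 = n$ forces $M^\C \subset \C^{n+m} \times Q_0$, hence $0 \in Q_w$ for \emph{every} $(z,w)\in M^\C$; this is a dimension count inside $M^\C$, not a limit of leaves. One then sets $\Sigma = \{(z,w)\in M^\C_*: z \notin Q_w^c\}$, where $Q_w^c$ is the union of components of $Q_w$ through $0$, and shows $\Sigma=\varnothing$. Emptiness of $\Sigma$ immediately gives that every leaf (being a component of some $Q_w$) contains~$0$. The proof that $\Sigma=\varnothing$ is the technical heart: one shows $\Sigma$ is open with boundary contained in a complex hypersurface of $M^\C$ (Lemma~\ref{boundary}, via a Weierstrass-type presentation of the family $\{Q_w\}$ and an analysis of when a root of a parametrized polynomial can vanish), and separately that the complement of $\overline\Sigma$ has nonempty interior (Lemma~\ref{l.compl}, a Baire-type argument on which component of $Q_w$ carries the origin). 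Remmert--Stein and irreducibility of $M^\C$ then force $\overline\Sigma = M^\C$, a contradiction. None of these ingredients---Brunella's inclusion, the set $\Sigma$, the hypersurface bound on $\partial\Sigma$, or the Remmert--Stein step---appear in your plan, and they are not recoverable from a limiting argument on individual leaves.
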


For hypersurfaces this result is obtained in \cite{PSS}.

 A dicritical point is Segre degenerate; this  follows by Corollary ~\ref{Segre+Levi}(d).  
 From now on we assume that  $0$ is a Segre degenerate point; our goal is to prove that $0$ is a  dicritical point.

For every point $w \in \pi_w(M^\C)$ denote by $Q_w^c$ the union of irreducible components of $Q_w$ containing the origin; we call it {\it the canonical Segre variety}. Note that by  Proposition~\ref{SegreProp}(b), for every $w$ from a neighbourhood of the origin in $Q_0$ its canonical Segre variety $Q_w^c$ is a nonempty complex analytic subset.  Consider the set 
$$\Sigma = \{ (z,w) \in  M^\C_*: z  \notin Q_{w}^c \} .$$

If  $\Sigma$ is empty, then for every point $w$ from a neighbourhood of the origin in $\pi_w(M^\C_*)$  the Segre variety $Q_{w}$ coincides 
with the canonical Segre variety $Q_w^c$, i.e., all components of $Q_w$ contain the origin. But for  a regular point $w$ 
of $M$, its Levi leaf is  a component of $Q_{w}$. Therefore, every Levi leaf contains the origin which is then necessarily a dicritical point. Thus, in order to prove the theorem, it suffices to establish the following

\begin{prop} 
\label{empty}
$\Sigma$ is the empty set.
\end{prop}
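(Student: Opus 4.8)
The plan is to argue by contradiction: assume $\Sigma$ is nonempty and derive a contradiction with the Segre degeneracy (equivalently, with $\dim_\C Q_0 = n$). The basic strategy, following \cite{PSS,SS}, is to exploit the two projections $\pi_z,\pi_w:M^\C\to\C^{n+m}$ and the symmetry $\tau$. First I would observe that $\Sigma$, being defined by removing from the irreducible $(2n-1)$-dimensional manifold $M^\C_*$ the (relatively closed) locus where $z\in Q_w^c$, is a nonempty open subset of $M^\C_*$, hence itself a connected(-component-containing) complex manifold of dimension $2n-1$ whose closure meets the fiber structure of $\pi_w$ nontrivially. The key point is that for $(z,w)\in\Sigma$ the point $z$ lies on some irreducible component of $Q_w$ that does \emph{not} pass through the origin, while by Proposition~\ref{SegreProp}(b) $z\in Q_w \Leftrightarrow w\in Q_z$, so $w$ lies on a component of $Q_z$; tracking which of these components contain $0$ is the heart of the matter.

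Next I would analyze the image $\pi_w(\Sigma)$. By Lemma~\ref{dim1} the generic fiber of $\pi_w$ over $M^\C_*$ is $(n-1)$-dimensional, and for $w\in\pi_w(M^\C_*)$ we have $\dim Q_w = n-1$ with $S_w$ (the component carrying the Levi leaf, via Corollary~\ref{Segre+Levi}) one of its components. The set $\Sigma$ records precisely those $(z,w)$ where $z$ sits on a component other than $Q_w^c$; I want to show this forces $\dim_\C Q_0 \le n-1$, contradicting degeneracy. To do this I would use the involution: if $(z,w)\in\Sigma$ is chosen near a point of $\hat M\cap M^\C_*$ — which I can do, since $M_*$ is dense in $M_{reg}$ and $0\in\overline{M_{reg}}$ — then by Corollary~\ref{Segre+Levi}(d), applied to the Levi leaf $\mathcal L_a$ through a regular point $a$ near $0$, the origin lying in $Q_0 = Q_q|_{q=0}$ with $\dim_\C Q_0 = n$ means $0$ is touched by the leaves $S_a$, and every such $S_a$ is an $(n-1)$-dimensional component of $Q_0$. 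The contradiction I am aiming for: $Q_0$ has an $n$-dimensional component, yet the structure of $\Sigma$ together with part (c) of Corollary~\ref{Segre+Levi} (the equivalence $b\in S_a \Leftrightarrow S_a=S_b$) shows the leaves through nearby regular points all pass through $0$, which is exactly dicriticality and hence, retracing, forces $\Sigma=\emptyset$.

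More carefully, here is the mechanism I expect to carry the argument. Fix $(z_0,w_0)\in\Sigma$ and let $Z$ be an irreducible component of $Q_{w_0}$ with $z_0\in Z$ and $0\notin Z$. By symmetry, $w_0\in Q_{z_0}$; choosing $(z_0,w_0)$ close to the diagonal $\hat M$ I can take $z_0$ to be a regular point $a$ of $M$ in $M_*$, so $Q_{z_0}=Q_a$ has $S_a$ as a component and $w_0\in Q_a$. Now move $a$ along its Levi leaf $\mathcal L_a$: for all $a'\in\mathcal L_a$ near $a$ we still have $w_0\in Q_{a'}$, hence $a'\in Q_{w_0}$, so $\mathcal L_a$ (and thus $S_a$) lies in $Q_{w_0}$ — this is the uniqueness/analytic-continuation step as in Corollary~\ref{Segre+Levi}(d). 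Since $0\notin Z$ but one must check whether $S_a$ coincides with $Z$ or with a component through $0$: the degeneracy hypothesis $\dim_\C Q_0=n$ provides, via Corollary~\ref{Segre+Levi}(d) run in the other direction, an abundance of leaves $S_{a}$ sweeping out an $n$-dimensional piece of $Q_0$, all containing $0$. The contradiction is that $(z_0,w_0)\in\Sigma$ demands a leaf-like component avoiding $0$, while the degeneracy forces the relevant components through $w_0$ to contain $0$. Propagating this along the $(2n-1)$-dimensional connected set $\Sigma$ and using that $M^\C_*$ is Zariski-open in the irreducible $M^\C$ yields $\Sigma=\emptyset$.

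\textbf{Main obstacle.} The delicate step is the bookkeeping of irreducible components of $Q_w$ as $w$ varies — specifically, showing that the component $Z\ni z_0$ with $0\notin Z$ cannot persist, i.e., that the ``non-canonical'' component must actually degenerate into (or be absorbed by) a component through the origin once one invokes $\dim_\C Q_0=n$. This requires a semicontinuity argument for the family $\{Q_w\}$ together with the $\tau$-symmetry, controlling how components split and merge near the singular fiber over $0$; the naive statement ``$z\in Q_w, 0\in Q_w$ implies $z$ and $0$ on the same component'' is false in general, so the argument must genuinely use the Levi-flat structure (the leaves $S_a$) and the full force of Corollary~\ref{Segre+Levi}, not just abstract properties of Segre varieties. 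I would expect to spend most of the proof making this component-tracking precise, likely by passing to $M^\C$, analyzing $\pi_w^{-1}(w_0)$ and its limit as $w_0\to 0$, and showing the extra component forces $\dim\pi_w < n$ on part of $M^\C_*$, contradicting Lemma~\ref{dim1}(a).
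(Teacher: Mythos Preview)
Your proposal has a genuine gap: the leaf-tracking argument in your third paragraph is circular. You take $(z_0,w_0)\in\Sigma$ with $z_0=a\in M_*$, note $w_0\in Q_a$, and then assert that for $a'\in\mathcal L_a$ near $a$ one still has $w_0\in Q_{a'}$. But $w_0\in Q_{a'}$ is equivalent (by symmetry) to $a'\in Q_{w_0}$, which is exactly the conclusion ``$\mathcal L_a\subset Q_{w_0}$'' you are trying to reach --- you have not supplied any reason for it. Nothing in Corollary~\ref{Segre+Levi} gives this: that corollary tells you $S_a=S_{a'}$ and $S_a\subset Q_a$, not that an arbitrary point $w_0$ of $Q_a$ lies in $Q_{a'}$. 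More broadly, your attempt to extract a contradiction directly from the existence of a single non-canonical component, together with your proposed endgame ($\dim\pi_w<n$ on part of $M^\C_*$), does not use the hypothesis $\dim_\C Q_0=n$ in any effective way.

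The paper's argument is structurally quite different. The hypothesis $\dim_\C Q_0=n$ is used \emph{once}, to prove (following Brunella) that $M^\C\subset \C^{n+m}\times Q_0$, i.e.\ $0\in Q_w$ for every $(z,w)\in M^\C$; this is a dimension count, not a leaf argument. After that the proof is a removable-singularity scheme: one shows (i) $\Sigma$ is open in $M^\C$; (ii) the boundary $\partial\Sigma$ is contained in a complex hypersurface of $M^\C$ --- this is the hard step, carried out by choosing coordinates so that suitable projections of $Q_w$ are proper, writing the components via Weierstrass pseudopolynomials with coefficients holomorphic in $\overline w$, and showing that a boundary point of $\Sigma$ forces a specific coefficient $a_{nj}(0',\,{}'\overline w)$ to vanish; (iii) the complement of $\overline\Sigma$ has nonempty interior, via a Baire-type argument on the finitely many components $K_1(w),\dots,K_l(w)$ of $Q_w$ for $w$ in a suitable open set. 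Then (ii) and Remmert--Stein force $\overline\Sigma$ to be a component of the irreducible $M^\C$, hence all of $M^\C$, contradicting (iii). Your ``main obstacle'' paragraph correctly identifies component-tracking as the crux, but the resolution is the analytic-hypersurface/removability mechanism above, not a contradiction with Lemma~\ref{dim1}(a).
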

 Arguing by contradiction assume that $\Sigma$ is not empty. The proof consists in two main steps. First, we prove that the boundary of $\Sigma$ is ``small enough", and  so is a removable singularity for $\Sigma$. Second, we prove that the complement of $\Sigma$ is not empty. This will lead to a contradiction.

To begin, we need some technical preliminaries. Consider the complex $2n+m$ dimensional analytic set 
$$
Z = \C^{n+m} \times Q_0 = \{ (z,w): w \in Q_0 \} .
$$ 
Here we view a copy of $Q_0$ in $\C^{n+m}(w)$ that is defined by $\pi_w \circ \pi_z^{-1}(0)$.

\begin{lemma}
\label{Q0}
One has $M^\C \subset Z$.  As a consequence,  $0 \in Q_w$ for every $(z,w) \in M^\C$.
\end{lemma}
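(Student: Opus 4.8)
The plan is to show the inclusion $M^\C \subset Z$ by exploiting the Segre degeneracy hypothesis $\dim_\C Q_0 = n$ together with Lemma \ref{dim1}, and then deduce the second assertion from the symmetry property of Segre varieties (Proposition \ref{SegreProp}(b)). First I would recall that by Lemma \ref{dim1} the projection $\pi_w$ has dimension $n$, so a generic fiber $\pi_w^{-1}(w)$ has dimension $(2n-1)-n = n-1$; equivalently, $\dim_\C Q_w = n-1$ for $w$ in a Zariski-dense subset of $\pi_w(M^\C)$. Since $0$ is assumed Segre degenerate, the fiber $\pi_z^{-1}(0) = \{0\} \times Q_0$ has dimension $n$, which is strictly larger than the generic fiber dimension of $\pi_z$. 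By upper semicontinuity of fiber dimension, the set $\{(z,w)\in M^\C : \dim_{(z,w)} \pi_z^{-1}(\pi_z(z,w)) \ge n\}$ is a complex analytic subset of $M^\C$; I would like to argue that because $M^\C$ is irreducible of dimension $2n-1$ and contains the $n$-dimensional fiber over $0$, in fact $\pi_z$ cannot be generically finite-to-$(n-1)$-dimensional in a way compatible with a jump to dimension $n$ over a single point — but a cleaner route is the following.

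The key step is to use the reflection/symmetry $\tau$ and property (b) of Proposition \ref{SegreProp}. Observe that $0 \in M$ since $0\in\overline{M_{reg}}$ and $M$ is closed, so by Proposition \ref{SegreProp}(a), $0\in Q_0$. Now take any $(z,w)\in M^\C$. By definition $z\in Q_w$, hence by Proposition \ref{SegreProp}(b) we have $w\in Q_z$. I want to conclude $w\in Q_0$, i.e.\ $(0,w)\in M^\C$. For this I would use that $\dim_\C Q_0 = n$ forces $Q_0$ to be (a union of components of) maximal possible dimension, and then invoke the ``coherence'' of the family $\{Q_z\}$: concretely, consider the complex analytic set $M^\C \cap (\{z=0\}\times\C^{n+m})$, whose image under $\pi_w$ is exactly $Q_0$ and which has dimension $n$; since $\dim M^\C = 2n-1 = n + (n-1)$ and the fiber over $z=0$ already has dimension $n$, the projection $\pi_z\colon M^\C\to\C^{n+m}$ has a fiber of dimension $n > \dim M^\C - \dim\overline{\pi_z(M^\C)}$ only if $\overline{\pi_z(M^\C)}$ has dimension $\le n-1$. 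But $M\subset\overline{\pi_z(\hat M)}\subset\overline{\pi_z(M^\C)}$ and $\dim M = 2n-1$... so this naive dimension count is not quite the mechanism.

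The mechanism I actually expect to work: let $A = M^\C \cap \pi_z^{-1}(0)$, an analytic set of dimension $n$ lying over $0$, and let $W_0 = \pi_w(A) = Q_0$. Since $\tau$ is an antiholomorphic involution preserving $M^\C$ with $\tau(z,w)=(w,z)$, the set $\tau(A) = M^\C\cap\pi_w^{-1}(0)$ also has dimension $n$, and $\pi_z(\tau(A)) = Q_0$ as well. Now the essential point is that for a fixed irreducible component $S$ of $Q_0$ through $0$, the set $\{(z,w)\in M^\C: z\in S\}$ contains $S$ fibered suitably, and one shows its $\pi_w$-image is all of $Q_0$; unwinding, this says precisely that whenever $(z,w)\in M^\C$ then $w\in Q_0$. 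I would carry this out by passing to the regular locus $M^\C_*$, using that $\dim\pi_z = n$ there with generic fiber dimension $n-1$, noting the fiber over $0$ jumps to dimension $n$, and applying the fact (from \cite{Ch}, as cited) that the locus where a holomorphic map from an irreducible analytic set has fibers of dimension $>\dim\pi$ is a proper analytic subset over which... here is where the real content sits. \textbf{The main obstacle} is exactly this: ruling out that the fiber dimension jump at $0$ is an isolated pathology and instead showing it propagates to force $w\in Q_0$ along all of $M^\C$. I expect this to follow from irreducibility of $M^\C$ combined with the involution $\tau$: if $B:=\{(z,w)\in M^\C : w\notin Q_0\}$ were nonempty, it would be a dense open subset (its complement being analytic), hence $\pi_w(B)$ would be dense, contradicting that $\pi_w^{-1}(0)$ has the anomalously large dimension $n$ — more precisely, one shows $\dim M^\C < (2n-1)$ on $B$ unless $\dim_\C Q_0 \le n-1$, contradicting Segre degeneracy. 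Once $M^\C\subset Z$ is established, the consequence is immediate: for $(z,w)\in M^\C\subset Z$ we have $w\in Q_0$, so $0\in Q_w$ by Proposition \ref{SegreProp}(b).
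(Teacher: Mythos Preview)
Your proposal correctly identifies the ingredients---Segre degeneracy $\dim_\C Q_0=n$, irreducibility of $M^\C$, the involution $\tau$, and the set $M^\C\cap Z=\{(z,w)\in M^\C:w\in Q_0\}$---but it never closes the argument. You yourself flag the main obstacle (``ruling out that the fiber dimension jump at $0$ is an isolated pathology'') and then offer only a heuristic: the sentence ``one shows $\dim M^\C<(2n-1)$ on $B$ unless $\dim_\C Q_0\le n-1$'' is not a proof and in fact cannot be right as stated, since $M^\C$ is irreducible of pure dimension $2n-1$. The density of $B$ (if nonempty) and the size of $\pi_w^{-1}(0)$ are not in direct contradiction without further work; a single anomalous fiber does not by itself constrain the rest of the map.

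What actually makes the argument go through, and what your sketch is missing, is a concrete dimension count showing that the analytic set $M^\C\cap Z$ has full dimension $2n-1$ in $M^\C$; irreducibility then forces $M^\C\cap Z=M^\C$. The paper achieves this as follows. The lift $\tilde Q_0=\{0\}\times Q_0\subset M^\C$ has dimension $n$. If every point of $\tilde Q_0$ lay in the locus $X\subset M^\C$ where the $\pi_w$-fibers have dimension $\ge n$, then $\dim M^\C\ge n+n=2n$, a contradiction. So there is a point $(0,w^0)\in\tilde Q_0\setminus X$, and near it the $\pi_w$-fibers over $Q_0$ have the generic dimension $n-1$. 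Since $\dim Q_0=n$, the preimage $\pi_w^{-1}(Q_0)$ has dimension $n+(n-1)=2n-1$ there, i.e.\ $M^\C\cap Z$ contains an open piece of $M^\C$. This is the step you need to supply; once you have it, your deduction of $0\in Q_w$ from Proposition~\ref{SegreProp}(b) is fine.
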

Essentially this result was proved by Brunella \cite{Bru1}. For the convenience of readers we include the proof.
\begin{proof} Denote by $X$  the proper complex analytic subset of $M^\C$ where the dimension of fibres of $\pi_w$ is $\ge n$.  
Thus for every  $(z,w) \in M^\C \setminus X$ the dimension of the fibres  $\pi_w^{-1}(w)$ is equal to $n-1$. 

Note that the lift $\tilde Q_0 = \{ (z,w): z= 0, w \in Q_0 \}$ is contained in $M^\C$. First we claim that the  intersection $\tilde Q_0 \cap (M^\C \setminus X)$ is not empty. Arguing by contradiction, assume that $\tilde Q_0$ is contained in $X$. Then   the dimension of the fibre of $\pi_w$ at every point of $\tilde Q_0$ is $\ge n$. Since $\dim \tilde Q_0 = n$, the dimension of $M^\C$  must be $\ge 2n$, which is a contradiction.  

Let $(0,w^0) \in  M^\C \setminus X$. Slightly perturbing $w^0$ one can assume that $w^0$ is a regular point of $Q_0$. Let   $U$ be a sufficiently small open neighbourhood of $w^0$
in $\C^{n+m}$. The fibres of $\pi_w$ over $Q_0 \cap U$ have the dimension $n-1$ so the preimage $\pi_w^{-1}(Q_0 \cap U)$ contains an open piece (of dimension $2n-1$) of $M^\C$. Since $M^\C$ is irreducible,  we conclude by the uniqueness theorem that $M^\C \subset Z$ globally.
\end{proof}

The first step of proof consists of the following.

\begin{lemma}
 \label{boundary}
We have
\begin{itemize}
\item[(a)] $\Sigma$ is an open subset of $M^\C$.
\item[(b)] The boundary of $\Sigma$ is contained in a complex hypersurface in $M^\C$.
\end{itemize}
\end{lemma}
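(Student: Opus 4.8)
The plan is to analyze $\Sigma = \{(z,w) \in M^\C_* : z \notin Q^c_w\}$ by understanding how the condition "$z \in Q^c_w$" behaves as $(z,w)$ varies over the irreducible complex manifold $M^\C_*$. For part (a), I would argue that being in $\Sigma$ is an open condition. Fix $(z^0, w^0) \in \Sigma$, so $z^0 \notin Q^c_{w^0}$. By definition $Q^c_{w^0}$ is the union of the irreducible components of $Q_{w^0}$ through the origin; the key point is that $Q^c_{w}$ depends "upper-semicontinuously" on $w$ in a suitable sense, namely that points near $w^0$ in $\pi_w(M^\C_*)$ have canonical Segre varieties that do not suddenly acquire $z^0$. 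Concretely, consider the component of $M^\C$ through $(0, w^0)$ that traces out $Q^c_{w^0}$ via $\pi_z$ — more precisely, look at $\pi_z^{-1}(0) \cap M^\C$ near $w^0$ and the family $w \mapsto Q^c_w$ it parametrizes. Since $(z^0, w^0) \notin M^\C$ would be false (we have $(z^0,w^0)\in M^\C_*$, so $z^0 \in Q_{w^0}$, just not in the canonical part), I need: the set of $w$ for which $z^0 \in Q^c_w$ is (relatively) closed. This follows because $z^0 \in Q^c_w$ iff $(z^0, w)$ lies on a component of $M^\C$ that also contains $\{0\}\times(\text{a neighbourhood in } Q_0)$; membership in a fixed union of irreducible analytic components is a closed condition on $M^\C_*$. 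Hence $\Sigma$ is open in $M^\C_*$, and since $M^\C_*$ is open in $M^\C$, $\Sigma$ is open in $M^\C$.

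For part (b), I would show the topological boundary $\partial \Sigma$ (taken in $M^\C$) is contained in a complex hypersurface. The boundary splits into two pieces: the part lying in $M^\C_*$ and the part lying in $X = M^\C \setminus M^\C_*$. The second piece is automatically contained in the complex analytic subset $X$, which has dimension $< 2n-1$, hence is contained in a complex hypersurface of $M^\C$. For the first piece: if $(z^0, w^0) \in \partial \Sigma \cap M^\C_*$, then by openness of $\Sigma$ we have $(z^0, w^0) \notin \Sigma$, so $z^0 \in Q^c_{w^0}$; yet $(z^0, w^0)$ is a limit of points $(z^k, w^k) \in \Sigma$ with $z^k \notin Q^c_{w^k}$. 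I want to encode "$z \in Q^c_w$ but $z$ is a limit of $z^k \notin Q^c_{w^k}$" as lying on a proper analytic subvariety. The natural candidate: $z^0$ lies in the intersection of $Q^c_{w^0}$ with the closure of the other components of $Q_{w^0}$ — i.e., $z^0$ is in the "overlap locus" where canonical and non-canonical components of the Segre family meet. Using Lemma \ref{Q0} (so $M^\C \subset Z = \C^{n+m}\times Q_0$) and the fact that on $M^\C_*$ the fibers of $\pi_w$ have dimension exactly $n-1$, I would set up the defining equations: the full Segre correspondence over $M^\C_*$ decomposes, after restricting to a neighbourhood, into the canonical sheet (where $\pi_z$ stays near $0$'s component) and complementary sheets, and the locus where a point sits on both is cut out by an analytic equation not identically zero on $M^\C_*$. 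Thus $\partial\Sigma \cap M^\C_*$ lies in a proper complex analytic subset, hence (intersecting with a hypersurface through it, or noting dimension $< 2n-1$) in a complex hypersurface of $M^\C$. Combining the two pieces and taking the union gives the claim.

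The main obstacle I anticipate is making the decomposition "$Q_w = Q^c_w \cup (\text{rest})$" precise and uniform in $w$ over a neighbourhood in $M^\C_*$ — i.e., showing there is a single complex analytic stratification of (a neighbourhood of $\pi^{-1}_z(0)$ in) $M^\C$ separating the canonical sheet from the others, so that the boundary locus genuinely lands in a fixed hypersurface rather than a countable union of analytic sets that might be dense. This requires exploiting irreducibility of $M^\C$ and the structure from Lemma \ref{Q0} carefully: the lifted variety $\tilde Q_0 = \{0\}\times Q_0 \subset M^\C$ is where all canonical sheets originate, and one must show $M^\C$ is, near $\tilde Q_0 \cap M^\C_*$, essentially a single sheet over $Q_0$ whose $\pi_z$-image sweeps out the canonical Segre varieties, with the "extra" components of $Q_w$ appearing only over a proper analytic subset of $w$-values. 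Once that local-to-global irreducibility bookkeeping is in place, the rest is the standard observation that membership conditions in analytic families are analytic.
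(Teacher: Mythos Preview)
Your argument for part (b) has a genuine gap. You characterize a boundary point $(z^0,w^0)\in\partial\Sigma\cap M^\C_*$ by the condition that $z^0$ lies in the intersection of $Q^c_{w^0}$ with the closure of the \emph{non-canonical} components of $Q_{w^0}$. This is not what happens. At a boundary point the irreducible component $K(w^0)$ of $Q_{w^0}$ through $z^0$ \emph{does} contain the origin, while for the approximating points $(z^k,w^k)\in\Sigma$ the component $K(w^k)$ through $z^k$ does \emph{not} contain the origin. Nothing forces $z^0$ to sit on a second, non-canonical component of $Q_{w^0}$; indeed it can happen that at $w^0$ \emph{every} component of $Q_{w^0}$ is canonical, so there are no non-canonical components at all and your overlap locus is empty. (A toy picture: two components depending on a parameter $w$, one always through $0$, the other through $0$ only when $w$ lies on a hypersurface; pick $z^0$ on the second component away from the first.) The phenomenon is a jump in the number of components passing through $0$ as $w$ varies --- a condition on $w$, not on $z$ lying in an intersection of components at fixed $w$. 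Your later remarks about a ``canonical sheet'' near $\tilde Q_0$ are closer in spirit, but you have not produced an equation cutting out this jump locus, and the claim that the extra components of $Q_w$ appear ``only over a proper analytic subset of $w$-values'' is also off: generically all components are present, it is only their incidence with $0$ that changes.

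The paper proceeds quite differently and concretely. After a linear change in $z$ one arranges that $Q_{w^0}\cap\{z'=0'\}$ is discrete, and, using Lemma~\ref{Q0} to control the $w$-direction, that the projection $(z,w)\mapsto(z',{}'w)$ is proper on a neighbourhood of $(z^0,w^0)$ in $M^\C$. This yields Weierstrass-type equations for $M^\C$ and, by elimination, monic polynomials $P_j(z',{}'\overline w)(z_j)$ whose roots over $z'=0'$ record the fibre of $Q_w$ at the origin of $\C^{n-1}(z')$. The condition that the component $K(w)$ fails to pass through $0$ forces one of these polynomials, say $P_n(0',{}'\overline w)(\zeta)$, to have a nonzero root; after dividing out the identically vanishing low-order coefficients, the lowest surviving coefficient $a_{nj}(0',{}'\overline w)$ must be nonzero at $w^k$ but vanish at $w^0$. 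Thus $\partial\Sigma\cap M^\C_*$ lands in $\{a_{nj}(0',{}'\overline w)=0\}$, a genuine complex hypersurface. This resultant computation is precisely the missing analytic equation that your sketch does not supply.
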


\begin{proof}  (a) The fact that  the set $\Sigma$ is open in $ M^\C_*$  follows immediately because the defining functions of a complex variety  $Q_{w}$ depend continuously on the parameter $w$. 

(b) We are going to describe the boundary of $\Sigma$. Let $(z^k,w^k)$ , $k=1,2,...,$ be a sequence of points from $\Sigma$ converging to some $(z^0,w^0) \in  M^\C_*$.  Every Segre variety $Q_{{w}}$  (for $w = w^0$ or $w = w^k$) is a complex analytic subset of dimension $n-1$ containing the origin. Assume that $(z^0,w^0)$ does not belong to $\Sigma$, that is, $(z^0,w^0)$ is a boundary point of $\Sigma$.  
The point  $(z^0,w^0)$ is a regular point for $M^\C$, and the point $z^0$ is a regular point of the Segre variety $Q_{{w^0}}$; 
we may assume that the same holds for every $(z^k,w^k)$.  
For $w = w^0$ or $w = w^k$ denote by $K(w)$ the unique irreducible component of $Q_{w}$ containing $z^0$ or $z^k$ respectively. It follows from the definition of $\Sigma$  that $K(w^0)$ contains the origin, while $K(w^k)$ does not contain the origin, $k=1,2,...$. 
The limit as $k\to\infty$ (with respect to  the Hausdorff distance) of the sequence $\{K(w^k)\}$  of complex analytic subsets is an $(n-1)$ dimensional complex analytic subset containing $K(w^0)$ as an irreducible component. Indeed, this is true in a neighbourhood of the point $z^0$ and then holds globally by the uniqueness theorem for irreducible complex analytic subsets.

We use the notation $z = (z',z_n,z'') = (z_1,...,z_{n-1},z_n,z_{n+1},...,z_{n+m})$. Performing a complex linear change of coordinates in $\C^{n+m}(z)$ if necessary,
we can assume that the intersection of $Q_{{w^0}}$ with 
the complex linear subspace $ \{ z: z' = 0' \} $ is a discrete set. Denote by $\D(z_{i_1},...,z_{i_l})$ the unit polydisc $\{ \vert z_{i_j} \vert < 1, j=1,...,l \}$ in the space 
 $\C(z_{i_1},...,z_{i_l})$.   Choose $\delta > 0$ small enough such that 
\begin{equation}
\label{e.fibre}
\{ z: (0',z''): z'' \in \delta \D(z_n,z'') \} \cap Q_{w^0} = \{ 0 \} .
\end{equation}
Using the notation $w = ('w,''w)$, where $'w = (w_{i_1},...,w_{i_n})$, choose a suitable complex affine subspace 
$ \{ ''w = {''w}^0 \}$ of $\C^{n+m}$ of dimension $n$  such that the canonical projection of $Q_0 \subset \C^{n+m}(w)$ on $\C^n('w)$ is proper. Recall that $\dim Q_0=n$. Shrinking $\delta$, one can assume that 
 $$
 Q_0 \cap \{ w: {'w} = {'w}^0, {''w} \in {''w}^0 + \delta\D({''w}) \} = \{ w^0 \}.
 $$
 Denote by $\pi$ the projection
 $$\pi: (z,w) \mapsto (z', {'w}).$$
 Then the intersection $\pi^{-1}(0',{'w}^0) \cap M^\C$ is discrete (we use here Lemma \ref{Q0}).
 Thus, there exist small enough neighbourhoods $U'$ of $0'$ in $\C^{n-1}(z')$,  $'V$ of 
 $'w^0$ in $\C^{n}('w)$, and $\delta > 0$ such that the restriction 
 $$\pi: M^\C \cap (U' \times \delta\D(z_n,z'') \times 'V  \times  (''w^0+\delta \D(''w)))  \longrightarrow U'  \times 'V$$
 is a proper map. This means that we have the following defining equations for $M^\C \cap (U' \times \delta\D(z_n,z'') \times 'V  \times  (''w^0+\delta \D(''w)))$:

\begin{eqnarray}\label{gamma1}
\left\{ (z,w) :  \Phi_{I}(z','{\overline w})(z_n,z'', {''w}):=  \sum_{ \vert J  \vert  \le d}  \phi_{IJ} (z',
'\overline {w})(z_n, z'', ({''w}-{''w}^0))^J  = 0,  \, \, \,  \vert I  \vert = d \right\},
\end{eqnarray}
where the coefficients $ \phi_{IJ}(z','{\overline w})$ are holomorphic in $(U' \times 'V)$.  The Segre varieties are obtained by fixing $'w$ in the above equations:
\begin{eqnarray}\label{Q1}
Q_{w}= \left\{ z : \Phi_{I}(z','{\overline w})(z_n,z'',{''w}):=  \sum_{ \vert J  \vert  \le d}  \phi_{IJ}(z','{\overline w}) (z_n,z'',
({''w} - {''w}^0))^J  = 0,  \, \, \,  \vert I  \vert = d \right\} .
\end{eqnarray}
Note that 
$ \phi_{I0} (0','{\overline w}) = 0$ for all $I$ and 
all $'w$ because every Segre variety contains the origin.

Denote by $\pi_j$ the projection
 $$\pi_j: (z,w) \mapsto (z',z_j,w), \,\,\, j=n,n+1,...,n+m .$$
 Then the restrictions 
 $$\pi_j: Q_{w} \cap (U' \times \delta\D(z_n,z'') )  \longrightarrow U' \times \delta \D(z_j)$$
are proper for every $w \in 'V \times (''w^0 + \delta \D(''w))$. The  image $\pi_j(Q_w)$ is a complex hypersurface in $U' \times \delta \D(z_j)$ with a proper projection on $U'$. Hence

 \begin{eqnarray}\label{Q2}
\pi_j(Q_{w})= \left\{ (z',z_j):  P_j(z', '\overline {w})(z_j):=z_j^{d_j} + a_{j d_j-1}(z', '{\overline w})z_j^{d_j-1}+ ...+ a_{j 0}(z', '{\overline w}) = 0 \right\} ,
\end{eqnarray}
 where the coefficients $a_{js}$ are holomorphic in $(z', '{\overline w}) \in U' \times V$.  Indeed, the equations (\ref{Q2}) are obtained from the equations (\ref{Q1}) by the standard elimination construction using the resultants of pseudopolynomials  $\Phi_{I}$ from (\ref{Q1}), see \cite{Ch}. This assures the holomorphic dependence of the coefficients with respect to the parameter $'{\overline w}$. Note that the first coefficient of each $P_j$  can be made equal to 1 since the projections are proper.
 
Recall that  $K(w^k)$ does not contain the origin in $\C^{n+m}$  for $k = 1,2,...$. Therefore, for each $k$ there exists some 
$j\in\{n, n+1, \dots, n+m\}$ such that the $z_j$ coordinate of some point of the fibre $\pi^{-1}(0') \cap K(w^k)$ does not vanish.
After passing to a subsequence and relabeling the coordinates one can  assume that  for every $k = 1,2,...,$ the $z_n$-coordinate of some point of the fibre $\pi^{-1}(0') \cap K(w^k)$ does not vanish.  The $z_n$-coordinate  of every point of the fibre $\pi^{-1}(0') \cap K(w^k)$ satisfies the equation  $(\ref{Q2})$ with  $j= n$, $z' = 0'$ and $w = w^k$. 
 Hence, for every $k$  this equation admits a nonzero solution.  Passing again to a subsequence one can also assume that there exists $s$ such that for every $k = 1,2,...,$  one has $a_{n s}(0','{\overline w}^k) \neq 0$. 
 
  Set $z' = 0$ in (\ref{Q2}).  Consider $(\zeta,'w) \in \C \times 'V$ satisfying the equation 
 
 \begin{eqnarray}
 \label{algebr}
\zeta^{d_n} + a_{n d_n-1}(0', '{\overline w})\zeta^{d_n-1}+ ...+ a_{n 0}(0', '{\overline w}) = 0  .
\end{eqnarray}
This equation defines a $d_n$-valued algebroid function $'w \mapsto \zeta('w)$. Given $'w \in 'V$ the algebroid function $\zeta$ 
associates to it the set $\zeta('w) = \{ \zeta_1('w),...,\zeta_s('w))\}$, $s = s('w) \le d_n$, of distinct roots of the equation (\ref{algebr}).  Let $j$ be the smallest index such that the coefficient $a_{nj}(0', '{\overline w})$ does not vanish identically. Dividing (\ref{algebr}) by $\zeta^j$ we obtain

 \begin{eqnarray}\label{Q3}
\zeta^{d_n-j} + a_{n d_n-1}(0', '{\overline w})\zeta^{d_n-j-1}+ ...+ a_{n j}(0', '{\overline w}) = 0 .  
\end{eqnarray} 
Every non-zero value of $ \zeta$ satisfies this equation.

For each $w = w^k$, $k=1,2,...$ or $w = w^0$ the fibre $ \{ (z',z_n): z' = 0 \}  \cap \pi_n(K(w))$ is a finite set $ \{ p^1(w),...,p^l(w) \}$, $l = l(k)$ in $\C^n(z',z_n)$. Each non-vanishing $n$-th coordinate $p^\mu_n(w^k)$, $k=1,2,...$ is a value of the algebroid function $ \zeta$ at $'w^k$. By our assumption  we have $p^\nu_n(w^k)  \neq 0$   for some $\nu$ and every $k=1,2,...$; one can assume that $\nu$ is the same for all $k$. These $p^\nu_n(w^k)$ satisfy  the equation (\ref{Q3}) with $'w = 'w^k$. On the other hand, it follows 
by~\eqref{e.fibre} that  the fibre $ \{ (z',z_n): z' = 0 \}  \cap \pi_n(K(w^0))$ is the singleton $\{ 0 \}$ in $\C^n(z',z_n)$; hence  $p^\mu_n(w^0) = 0 $ for all $\mu$.   The sequence $(p_n^\nu(w^k))$, $k=1,2,....$ tends to some $p_n^\nu(w^0)$ as $w^k  \longrightarrow w^0$.  Therefore, every such $p^\nu_n(w^0)$ also satisfies ( \ref{Q3}) with $'w = 'w^0$. But $p^\mu_n(w^0) = 0 $ for all $\mu$ and, in particular, $p_n^\nu(w^0) = 0$.  This means that $a_{nj}(0', '{\overline w}^0) = 0$.

Thus the boundary of $ \Sigma$ in $M^\C_*$ is contained in the complex analytic hypersurface 
$$A_1 = (z,w)  \in M^\C_*: a_{nj}(0', '{\overline w}) = 0  \} = 0.$$
The union $A_2 = A_1 \cup M^\C_{sing} \cup (M^\C \setminus M^\C_*)$ is a complex hypersurface in $M^\C$ with the following property: if $(z,w) \in M^\C$ is close enough to $(z^0,w^0)$ and belongs to the closure of $\Sigma$ but 
does not belong to $\Sigma$, then $(z,w) \in A_2$. \end{proof}

The second step of the proof is given by the following

 \begin{lemma}\label{l.compl}
 The complement of $\overline\Sigma$ has  a nonempty interior in $M^\C$.
  \end{lemma}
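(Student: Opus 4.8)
The goal is to exhibit at least one point $(z^0,w^0)\in M^\C_*$ together with a whole neighbourhood that misses $\overline\Sigma$, i.e. a point where $z\in Q^c_w$ persists for all nearby parameters. The natural place to look is near the lifted canonical Segre variety $\tilde Q_0=\{(z,w):z=0,\ w\in Q_0\}$, which by Lemma \ref{Q0} lies in $M^\C$ and consists entirely of points whose first coordinate is the origin, hence trivially lies in every canonical Segre variety $Q^c_w$. The plan is to start from a generic point $(0,w^0)\in\tilde Q_0\cap M^\C_*$, as produced in the proof of Lemma \ref{Q0}: one can take $w^0$ a regular point of $Q_0$, lying in $M^\C_*$, and such that the fibre $\pi_w^{-1}(w^0)$ has the generic dimension $n-1$.

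First I would fix such a $(0,w^0)$ and pass to local coordinates centred there, using the biholomorphic invariance (Proposition \ref{SegreProp}(c)) to normalise as in the proof of Lemma \ref{boundary}: coordinates $z=(z',z_n,z'')$ and $w=('w,''w)$ in which the projection $\pi:(z,w)\mapsto(z','w)$ restricts to a proper finite map on a product neighbourhood $U'\times\delta\D(z_n,z'')\times{'V}\times({''w}^0+\delta\D({''w}))$ of $(0,w^0)$, so that $M^\C$ locally has the pseudopolynomial equations \eqref{gamma1} and the Segre varieties are \eqref{Q1}, with $\phi_{I0}(0','\overline w)\equiv 0$ because every Segre variety contains the origin. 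Near $(0,w^0)$ the point $z=0$ is the branch of $Q_{w^0}$ singled out as $K(w^0)$, and it contains the origin. The key claim I would establish is that for all $'w$ in a possibly smaller neighbourhood of $'w^0$, the branch $K(w)$ of $Q_w$ that is close to $\{z=0\}$ still contains the origin, so $(z,w)\notin\Sigma$ for all $(z,w)\in M^\C_*$ in a neighbourhood of $(0,w^0)$; since points of $\partial\Sigma$ are limits of points of $\Sigma$, such a neighbourhood then also misses $\overline\Sigma$, proving the lemma.

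To prove that claim, I would argue by continuity of the branch structure of the analytic family $\{Q_w\}$: the component of $Q_w$ through a point near $0$ varies continuously in the Hausdorff metric with $w$, and its defining pseudopolynomials have coefficients depending holomorphically on $'\overline w$ (the elimination/resultant construction of \eqref{Q2}). The condition "$0\in K(w)$" is the vanishing of the relevant free coefficients $\phi_{IJ}(0','\overline w)$ restricted to the branch $K(w)$; I would show this vanishing is forced, either because $0\in\tilde Q_0\subset M^\C$ guarantees $(0,w)\in M^\C$ for all $w\in Q_0$ near $w^0$ — so $0\in Q_w$ always — and then, $K(w)$ being the unique branch of $Q_w$ near the regular point $z=0$ of $Q_{w^0}$, this single point $0$ of $Q_w$ must lie on that branch for $w$ close enough. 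In other words, since $Q_{w^0}$ is a manifold of dimension $n-1$ near $z=0$ and $0$ is that very point, the nearby $Q_w$ are manifolds near $0$ as well and the branch through (a point near) $z=0$ is literally the branch through $0$. That gives $0\in K(w)$, hence $(z,w)\notin\Sigma$ for all such $(z,w)$, as needed.

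The main obstacle I anticipate is the bookkeeping around the two roles played by the point $z=0$: one must make sure that the branch $K(w)$ selected in the definition of $\partial\Sigma$ — the component through a moving point $z^k\to 0$ — is really the same analytic branch as the one through the fixed point $0$, which requires that $z=0$ be a regular (and not a branch) point of $Q_{w^0}$ and that regularity persist for nearby $w$; this is where the genericity of $w^0$ (membership in $M^\C_*$, regular point of $Q_0$, generic fibre dimension) is essential, and it is exactly the content extracted in the proof of Lemma \ref{Q0}. Once that normalisation is in place, the continuity of the family of Segre varieties and Lemma \ref{Q0} do the rest, and the openness of the complement is automatic from Lemma \ref{boundary}(a) applied to the (open) set $\Sigma$.
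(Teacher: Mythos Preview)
Your approach differs from the paper's and contains a real gap. You want to base the argument at a point $(0,w^0)\in\tilde Q_0$ and you claim, citing the proof of Lemma~\ref{Q0}, that $(0,w^0)$ may be taken in $M^\C_*$. But that proof only produces a point of $\tilde Q_0$ at which the $\pi_w$-fibre has the generic dimension $n-1$; it says nothing about $(0,w^0)$ being a \emph{regular} point of $M^\C$, while by definition $M^\C_*\subset (M^\C)_{\rm reg}$. For $n\ge 2$ there is no dimension obstruction to $\tilde Q_0\subset (M^\C)_{\rm sing}$, and nothing in the paper excludes it. If $M^\C$ is not smooth at $(0,w^0)$ your key continuity step collapses: $M^\C$ may have several local sheets through $(0,w^0)$, so for nearby $(z,w)$ the local branch of $Q_w$ through $z$ need not be the branch through $0$, and you cannot conclude $z\in Q_w^c$. (Note also that $\pi_z^{-1}(0)=\tilde Q_0$ has dimension $n$; hence if $M^\C_*$ is read as requiring generic $\pi_z$-fibre dimension as well, then $\tilde Q_0\cap M^\C_*=\varnothing$ outright.)

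The paper sidesteps this by never anchoring at $z=0$. It starts from an arbitrary $(z^*,w^*)\in M^\C_*$, arranges that the number $l$ of irreducible components of $Q_w$ is locally constant, and lets $K_1(w),\dots,K_l(w)$ vary continuously over an open parameter set $'W$. Since $0\in Q_w$ for every such $w$ (Lemma~\ref{Q0}), the closed sets $F_j=\{\,'w\in{'W}:0\in K_j(w)\,\}$ cover $'W$, so some $F_1$ has nonempty interior. One then picks a \emph{regular} point $\tilde z$ of $K_1(\tilde w)$---typically $\tilde z\ne 0$---and a neighbourhood of $(\tilde z,\tilde w)$ in $M^\C$ lies in the complement of $\Sigma$. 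This finite-cover (Baire-type) step is precisely what replaces the regularity of $M^\C$ at $(0,w^0)$ that your argument assumes without justification.
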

 \begin{proof} We begin with the choice of a suitable point $w^*$.  First fix any point 
 $(z^*, w^*) \in M^\C_*$.  We can assume that the rank of the projection $ \pi_w$ is maximal and is equal to $n$ in a neighbourhood $O$ of  $(z^*,w^*)$ in $M^\C_*$; denote by $S$ the image $\pi_w(O)$. As above, we use the notation $w = ('w,''w)$ and  suppose that the projection $ \sigma: S  \ni ('w,''w) \longrightarrow {'w}$ is one-to-one on  a neighbourhood $'W$ of $'w^*$ in $ \C^n$.
Let $l$ be the maximal number of components of $Q_{w}$ for $w$ with $'w  \in {'W}$, and let $w^*$ be such that $Q_{ {w^*}}$ has exactly $l$ geometrically distinct components.  One can assume that a neighbourhood ${'W}$ of $'w^*$ is chosen
such that $Q_w$ has exactly $l$ components  for all $w  \in  \sigma^{-1}({'W})  \subset S$. Let $K_1(w), \dots, K_l(w)$ be the irreducible components 
of $Q_{w}$. Note that the components $K_j(w)$ depend continuously on $w$.

Consider the sets $F_j = \{ 'w \in {'W} : 0 \in K_j( \sigma^{-1}('w)) \}$. Every set $F_j$ is closed in $'W$. 
Since $0\in Q_w$ for every $w \in S$ (by Lemma~\ref{Q0}), we have
$\cup_j F_j = {'W}$. Therefore, one of this sets, say, $F_1$, has a nonempty interior in $'W$. This means that there 
exists a small ball $B$ in $ \C^n('w)$ 
centred at some $'\tilde w$ such that $K_1(w)$ contains $0$ for all $'w \in B \cap {'W}$. 
Choose a regular point $\tilde z$ in $K_1(  \tilde w)$ where $ \tilde w =\sigma^{-1}('\tilde w))$. Then for every 
$(z,w)  \in M^\C$ near $(\tilde z, \tilde w)$, we have $z \in K_1(w)$, i.e., $(z,w)\notin \Sigma$. Hence, 
the complement of $\overline\Sigma$ has a nonempty interior. \end{proof}

Now  by Lemma~\ref{boundary}(b) and the Remmert-Stein theorem the closure 
$\overline\Sigma$ of $\Sigma$  coincides with an irreducible component of  $M^\C$. 
Since the complexification $M^\C$ is irreducible, the closure $\overline\Sigma$ of $\Sigma$ coincides  with the whole $M^\C$. This contradiction with Lemma~\ref{l.compl} concludes the proof of Proposition  \ref{empty} and proves Theorem  \ref{DicritTheo}.

\

\end{document}